\newtheorem{theorem}{Theorem}[section]
\newtheorem{proposition}[theorem]{Proposition}
\newtheorem{conjecture}[theorem]{Conjecture}
\newtheorem{observation}[theorem]{Observation}
\newtheorem{corollary}[theorem]{Corollary}
\newtheorem{definition}[theorem]{Definition}
\newcommand{\ql}{\ensuremath{q_{\text{list}}}}
\newcommand{\bE}{\ensuremath{\mathbf{E}}}
\begin{document}

\title[Edge-coloring linear hypergraphs with medium-sized edges]{Edge-coloring linear hypergraphs with medium-sized edges}
\author[Vance Faber and David G. Harris]{
{\sc Vance Faber}$^{1}$
\and
{\sc David G.~Harris}$^{2}$
}

\setcounter{footnote}{0}

\addtocounter{footnote}{1}
\footnotetext{IDA/Center for Computing Sciences, Bowie MD 20707. Email: \texttt{vance.faber@gmail.com}}

\addtocounter{footnote}{1}
\footnotetext{Department of Computer Science, University of Maryland, 
College Park, MD 20742. 
Email: \texttt{davidgharris29@gmail.com}.}

\date{}
\maketitle

\begin{abstract}
Motivated by the Erd\H{os}-Faber-Lov\'{a}sz (EFL) conjecture for hypergraphs, we consider the list edge coloring of linear hypergraphs. We show that if the hyper-edge sizes are bounded between $i$ and $C_{i,\epsilon} \sqrt{n}$ inclusive, then there is a list edge coloring using $(1 + \epsilon) \frac{n}{i - 1}$ colors. The dependence on $n$ in the upper bound is optimal (up to the value of $C_{i,\epsilon}$).
\end{abstract}

\section{Introduction}
Let $H = (V,E)$ be a hypergraph with $n = |V|$ vertices. Each edge $e \in E$ can be regarded as a subset of $V$. We say that $H$ is \emph{linear} if $|e \cap e'| \leq 1$. We define the \emph{minimum rank} of $H$ as $\rho = \min_{e \in E} |e|$ and similarly the \emph{maximum rank} of $H$ as $P = \max_{e \in E} |e|$. A graph is a special case with $\rho = P = 2$.

For any hypergraph $H$, one may define the \emph{line graph} of $H$, to be an (ordinary) graph $L(H)$ on vertex set $E$, with an edge $\{ e_1, e_2 \}$ in $L(H)$ iff $e_1 \cap e_2 \neq \emptyset$.  The edge chromatic number $q(H)$ (respectively list edge chromatic number $\ql(H)$) is the chromatic number $\chi$ (respectively list chromatic number $\chi_{\text{list}}$) of $L(H)$.

A long-standing conjecture, known now as the Erd\H{o}s-Faber-Lov\'{a}sz conjecture, can be stated as
\begin{conjecture}[EFL] 
Let $H$ be a linear hypergraph with $n$ vertices and no rank-1 edges. Then $q(H) \leq n$. 
\end{conjecture}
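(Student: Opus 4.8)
This is the full Erd\H{o}s--Faber--Lov\'{a}sz conjecture; in complete generality it is notoriously hard, and it is known for all sufficiently large $n$ (Kang, Kelly, K\"uhn, Methuku and Osthus), so the plan below is really a high-level scheme whose bottleneck is exactly the regime this paper treats. The starting point is the standard reformulation: $L(H)$ is the union of $n$ cliques $Q_v$ ($v\in V$), where $Q_v$ is the set of edges through $v$; linearity makes these cliques \emph{almost disjoint}, $|Q_v\cap Q_w|\le 1$, and since the $\deg(v)$ edges through $v$ occupy $\deg(v)$ distinct vertices of $V\setminus\{v\}$ we get $|Q_v|=\deg(v)\le n-1$. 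Thus it suffices to properly colour a union of $n$ pairwise almost-disjoint cliques, each of order at most $n-1$, with $n$ colours, and throughout one exploits the linearity identity $\sum_{e\in E}\binom{|e|}{2}\le\binom{n}{2}$.

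I would split $V$ into \emph{dense} vertices, with $\deg(v)\ge(1-\delta)n$, and \emph{sparse} vertices. A dense vertex is forced to have almost all of its edges of size $2$ and to meet almost all of $V$, so the size-$2$ edges spanned by the dense set form a graph close to complete, and the dense part behaves essentially like the known $K_n$ case: it can be edge-coloured with $n+o(n)$ colours by Vizing / Chetwynd--Hilton-type results for dense graphs, the slack being absorbed into a reserved palette of $o(n)$ colours. Each sparse vertex gives a clique $Q_v$ of order $<(1-\delta)n$, so after the dense part is coloured every remaining edge still has $\Omega(n)$ admissible colours; moreover two remaining edges through a common vertex have few common neighbours in $L(H)$, by linearity, so a R\"odl-nibble / semi-random colouring disposes of the edges incident only to sparse vertices, provided their sizes stay away from $\sqrt n$.

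The genuine obstacle is the edges of \emph{medium} size, of order about $\sqrt n$: for these the counting bound (only $O(n)$ of them) and the degree bound in $L(H)$ (each meets $O(\sqrt n)$ others of comparable size) become tight at once, so neither the dense/Vizing step nor the nibble closes the gap alone. Handling precisely this case --- a linear hypergraph all of whose edges have size between a constant $i$ and $\Theta(\sqrt n)$, using only $(1+\epsilon)\frac{n}{i-1}$ colours --- is the crux, and it is exactly the list edge-colouring statement isolated in the abstract. Granting that, the assembly is: reserve $\epsilon n$ colours; colour the dense part and the bounded-size edges with the rest so that every medium edge retains a list of size $(1-o(1))n$; apply the list result to the medium edges; and finally colour the $O(n/C^2)$ edges of size $>C\sqrt n$ greedily, each still having nearly $n$ free colours. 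The delicate bookkeeping is making the palettes and residual list sizes consistent across these stages, and in particular certifying that the medium-edge stage genuinely sees lists long enough to feed the list theorem, with no colour spent twice.
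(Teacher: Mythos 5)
The statement you are asked about is labeled a \emph{conjecture} in the paper, and the paper offers no proof of it: it only establishes Theorem~\ref{main-thm}, a list-coloring bound for linear hypergraphs whose edge sizes all lie between a constant $i\geq 3$ and $C_{i,\epsilon}\sqrt{n}$. Your submission is likewise not a proof but a proof \emph{scheme}, and you say as much; so the honest verdict is that there is a genuine gap --- indeed the gap is essentially the entire conjecture. Two concrete points. First, your budget does not close: you propose to color the dense part with ``$n+o(n)$ colours by Vizing / Chetwynd--Hilton-type results'' and to absorb the slack into a reserved palette of $o(n)$ colors, but EFL demands \emph{exactly} $n$ colors, and $n + o(n)$ plus a reserve is strictly more than $n$. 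Squeezing out that $o(n)$ excess is precisely the hard part of the Kang--Kelly--K\"{u}hn--Methuku--Osthus argument (an intricate absorption step), and it cannot be waved through. Second, even granting that argument, it applies only for $n$ sufficiently large; your sketch says nothing about small $n$, where the conjecture remains unverified in general, so the statement as written (for all $n$) is not established.

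Where your sketch does intersect the paper is instructive: you correctly identify the medium-size edges (sizes around $\sqrt{n}$) as a regime where both the counting bound and the line-graph degree bound become tight simultaneously, and that is exactly the case the paper isolates and resolves via Proposition~\ref{gap-prop} (a dyadic peeling of edge-size classes), Proposition~\ref{prop1} (triangle counts in the line graph feeding Theorem~\ref{vu-thm}), and Theorem~\ref{kahn-thm} for the bounded-size classes. But the paper deliberately claims only this piece, with $(1+\epsilon)\frac{n}{i-1}$ colors and a minimum-rank hypothesis $\rho \geq i \geq 3$; it does not claim, and your proposal does not supply, the machinery needed to stitch the rank-$2$ edges, the dense vertices, and the exact color count into a proof of the conjecture itself.
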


There has been partial progress to proving this result; in \cite{kahn2} Kahn showed that $q(H) \leq n+o(n)$. See \cite{arroyo} for a more recent review of results.

In this paper, we will show a related result for hypergraphs in which the edges all have medium size. More specifically, we show the following:
\begin{theorem}
\label{main-thm}
For any integer $i \geq 3$ and any $\epsilon > 0$, there exists some value $C_{i, \epsilon} > 0$ with the following property. For any linear hypergraph $H$ on $n$ vertices, with minimum rank $\rho \geq i$ and maximum rank $P \leq C_{i,\epsilon} \sqrt{n}$, it holds that
$$
\ql(H) \leq (1 + \epsilon) \frac{n}{i-1}
$$
\end{theorem}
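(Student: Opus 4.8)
The plan is to prove this by a semi-random (``nibble'') edge-colouring argument in the spirit of Pippenger--Spencer and Kahn, where the bound $P\le C_{i,\epsilon}\sqrt n$ enters twice: once to force the line graph $L(H)$ to have small clique number, and once to keep the nibble under control. The result should be read as a refinement of Kahn's $n+o(n)$ bound by a factor $\approx i-1$, the improvement coming from replacing ``$n$'' by the clique bound ``$n/(i-1)$''. The first, and easiest, step is a degree bound: if a vertex $v$ lies in $d(v)$ edges, then since $H$ is linear these edges meet pairwise only at $v$, and since each has at least $\rho\ge i$ vertices they together cover at least $1+d(v)(i-1)$ vertices, so $d(v)\le (n-1)/(i-1)$. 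Writing $N:=\lfloor(1+\epsilon)\tfrac n{i-1}\rfloor$ for the colour budget, the role of the hypothesis $\rho\ge i$ is exactly that every vertex now carries a genuine multiplicative slack, $(1+\epsilon)d(v)<N+1$.

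The second step is the clique bound. A clique of $L(H)$ is precisely a pairwise-intersecting family of edges of $H$, and a classical lemma on linear hypergraphs says that such a family, all of whose edges have size at most $P$, either has a common vertex or has at most $P^2-P+1$ members (a family of the second kind essentially contains a projective plane of order $P-1$). A common-vertex family is a sub-pencil, hence has size at most $\max_v d(v)\le (n-1)/(i-1)$; and if $C_{i,\epsilon}$ is chosen small enough that $P^2-P+1\le (n-1)/(i-1)$ whenever $P\le C_{i,\epsilon}\sqrt n$, the other kind has size at most $(n-1)/(i-1)$ as well. Hence $\omega(L(H))\le (n-1)/(i-1)<N$, and, more usefully, the $L(H)$-neighbourhood of every edge $e$ is covered by the $|e|\le P$ pencils at its vertices, each a clique of size at most $(n-1)/(i-1)$.

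The third step is the colouring itself. After a routine regularisation (pad $H$ with dummy vertices and dummy edges so that all vertex degrees equal $D:=\lceil(n-1)/(i-1)\rceil$, and arrange to finish any low-degree remainder by a greedy pass at the end using the $\epsilon N$ of colours held in reserve), one runs the usual semi-random procedure with lists $A_e$, initially $A_e=L(e)$ with $|L(e)|\ge N\ge(1+\epsilon)D$: in each round every uncoloured edge activates independently with a small probability $\theta$, an active edge tentatively takes a uniform colour from $A_e$, keeps it permanently iff no edge sharing a vertex picked the same colour, and then deletes from $A_e$ all colours used by neighbours. The standard expectation-and-concentration calculations show that in a round the uncoloured-degree $u_v$ at each vertex shrinks by a factor $1-\Theta(\theta)$, while a list $A_e$ loses at most $\sum_{v\in e}(\text{colours newly used at }v)+(\text{deviation})$ colours, which since $|e|\le P$ and each pencil has size $\le D$ is at most $\Theta(\theta PD)+o(D)$. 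This last quantity must remain a small fraction of $N\approx D$, which forces $\theta=\Theta(\epsilon/(C_{i,\epsilon}\sqrt n))$ and hence $\Theta_{i,\epsilon}(\sqrt n\log n)$ rounds; so $C_{i,\epsilon}$ must be small enough (in terms of $i$ and $\epsilon$) that the total list shrinkage plus accumulated deviations still fit inside the slack between $D$ and $N$. A per-round Azuma/Chernoff estimate with a union bound over the polynomially many vertices, edges, colours and rounds then keeps the invariant $|A_e|\ge(1+\epsilon/3)\,u_v$ for all $v\in e$ valid throughout with positive probability; once all $u_v$ have been driven below $o(N/P)$, every still-uncoloured edge $e$ of the remaining hypergraph $H'$ has $\deg_{L(H')}(e)=\sum_{v\in e}(u_v-1)<|A_e|$, so a final greedy pass completes a proper list colouring with $N$ colours.

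I expect the main obstacle to be the error bookkeeping inside this nibble. Because edges can be as large as $C_{i,\epsilon}\sqrt n$, the nibble fraction is pushed down to $\theta=\Theta(1/\sqrt n)$ and one needs $\Theta(\sqrt n)$ rounds; a naive round-by-round accounting of concentration errors would lose a $\sqrt{\log n}$ (or worse) factor and overflow the $(1+\epsilon)$ budget. The remedy is the usual one: measure each round's deviations against the previous round's high-probability parameters rather than compounding fixed errors (equivalently, track a single global potential), and use the freedom to shrink $C_{i,\epsilon}$ so that the per-vertex slack strictly dominates the worst-case per-round loss. Carrying this through for the largest edges — whose lists shrink fastest and which meet the most pencils — together with nailing down the endgame, is the technical heart; the degree bound, the pencil structure of cliques, the regularisation, and the concluding greedy step are all routine.
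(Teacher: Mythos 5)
There is a genuine gap, and it sits exactly where you located ``the technical heart'': the first-order dynamics of your nibble are already fatally mismatched, so no amount of careful error bookkeeping (potential functions, non-compounding deviations) can rescue it. In one round with activation probability $\theta$, the uncoloured degree $u_v$ shrinks by a factor $1-\Theta(\theta)$, but a fixed colour $c\in A_e$ is claimed by some neighbour of $e$ with probability $\Theta(\theta\sum_{v\in e}u_v/N)=\Theta(\theta P)$, so the list shrinks by a factor $1-\Theta(\theta P)$. With $P=\Theta(\sqrt n)$ these two rates differ by a factor $P$: over the $\Theta(\log P/\theta)$ rounds needed to drive $u_v$ below $N/P$ (your stated endgame threshold), the cumulative expected list loss is $\Theta(N\log P)\gg N$, i.e.\ the lists are exhausted long before the greedy finish. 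Put more bluntly, edge $e$ has $\sum_{v\in e}d(v)\approx P\cdot\frac{n}{i-1}\gg N$ neighbours in $L(H)$, and the $\le D$ colours used on each of the $P$ pencils through $e$ have no reason not to union up to all $N$ colours; locally there is nothing forcing a surviving colour at $e$. This is precisely why Kahn's theorem (your template) carries a constant $c(\epsilon,P)$ depending on $P$ and is useless once $P$ grows with $n$ --- a point the paper makes explicitly. Your clique bound $\omega(L(H))\le(n-1)/(i-1)$ is correct but not load-bearing: there is no general mechanism converting a clique bound on a line graph into a matching chromatic bound (that would be close to the open linear-hypergraph chromatic index conjecture), and the only structural fact your nibble actually uses is the trivial pencil decomposition of $N_L(e)$.

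The paper avoids this by never running a hypergraph nibble on the large edges. It splits $E$ into dyadic rank classes $A_j=\{e:2^j\le|e|<2^{j+1}\}$; the classes of bounded rank are handled by Kahn's theorem (where the nibble is legitimate), while each class of large rank is coloured via Vu's local-sparsity theorem applied to the line graph: $L[A_j]$ has degree $O(n)$ but each vertex lies in only $O(n^2/2^j+n2^{2j})$ triangles, so $\ql(A_j)\le O\bigl(\frac{n}{j}+\frac{n}{\log(n/2^{2j})}\bigr)=o(n)$ --- a triangle-sparsity mechanism, not a list-versus-degree balance. A ``gap'' lemma (colour the larger classes first; an edge of rank $<2^{a}$ meets few edges of rank $\ge 2^{a+x}$) plus a random palette split then assembles the classes within budget. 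If you want to keep a nibble-flavoured proof for the large edges, you would have to run it on the line graph exploiting triangle sparsity (which is essentially how Vu's theorem is proved), not on the hypergraph with per-vertex colour lists.
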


In particular, the EFL conjecture holds for hypergraphs of minimum rank $\rho \geq 3$ and maximum rank $P \leq C \sqrt{n}$, for some universal constant $C = C_{3,1}$. By way of comparison, \cite{arroyo2} showed that the EFL conjecture holds for hypergraphs of minimum rank $\rho \geq \sqrt{n}$. However, Theorem~\ref{main-thm} can be significantly stronger than EFL in cases where $\rho$ is large. We also note that the dependence of $P$ on $n$ is optimal, up to the value of the constant $C_{i, \epsilon}$; see Section~\ref{lb-sec} for further details.

\section{Preliminaries}
\subsection{Notation}
To simplify some notation, we define the truncated logarithm by
$$
\log(x) = \begin{cases}
\ln(x) & \text{if $x \geq e$} \\
1 & \text{otherwise}
\end{cases}
$$
Thus, $\log(x) \geq 1$ for all $x \in \mathbf R$. 

If $H = (V,E)$ is a hypergraph and $E' \subseteq E$, then $H(E')$ is the hypergraph $(V, E')$. If $V' \subseteq V$, then $H[V']$ is the induced hypergraph $(V', E')$, where $E'$ is the set of edges involving only vertices of $V'$. If the hypergraph $H$ is understood, then we sometimes write $\ql(E')$ as shorthand for $\ql(H(E'))$.
\subsection{Background facts}
Our proof is based upon two powerful theorems for list-coloring hypergraphs. The first is due to  Kahn \cite{kahn}, restated in terms of linear hypergraphs.
\begin{theorem}[\cite{kahn}]
\label{kahn-thm}
For every $\epsilon > 0$ and integer $P > 1$ there exists a constant $c = c(\epsilon, P)$, such that any linear hypergraph $H$ with maximum rank $P$ and maximum degree $\Delta \geq c$ satisfies
$$
\ql(H) \leq \Delta (1 + \epsilon)
$$
\end{theorem}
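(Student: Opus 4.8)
The plan is to prove this by the semi-random ``nibble'' method, iterated random partial coloring together with the Lov\'{a}sz Local Lemma, which is the standard route to asymptotically optimal list edge colorings. First I would translate the statement into the line graph $G = L(H)$: properly coloring the edges of $H$ is exactly list-coloring the vertices of $G$ from lists of size $(1+\epsilon)\Delta$. The feature of $G$ that I would exploit is that, because $H$ is linear, the neighborhood of any vertex $e \in G$ decomposes into at most $P$ cliques, one per vertex $v \in e$, each of size at most $\Delta-1$ (the other edges through $v$); moreover linearity gives a codegree bound, since two edges meet in at most one vertex, so any two of these cliques share at most one edge. This local ``union of a few large cliques'' structure is precisely what lets $(1+\epsilon)\Delta$ colors suffice even though $G$ can have maximum degree as large as $P(\Delta-1)$.

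The core is an iterative procedure maintaining, for every still-uncolored edge $e$, a palette $L_e$ of colors not yet used on any intersecting edge. In each round I would have every uncolored edge pick a tentative color uniformly from $L_e$, keep that color if no intersecting edge picked the same color, and otherwise remain uncolored; the colors successfully claimed by neighbors are then deleted from the surviving palettes. I would track two families of quantities: the palette sizes $|L_e|$, and for each $e$ and each surviving color $c$ the ``$c$-degree'' $d_e(c)$, the number of uncolored neighbors $f$ of $e$ with $c \in L_f$. A short expected-value computation, using that each neighbor keeps its color with probability bounded below in terms of the current list-to-degree ratio, shows that in one round $|L_e|$ and the relevant degrees shrink by essentially the same factor, so an invariant of the form $|L_e| \ge (1+\epsilon')\,(\text{uncolored degree of }e)$ survives with only a slight degradation of the slack. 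Here the clique structure is what keeps the palette from shrinking too fast: within one clique at most one edge can claim a given color $c$, so the expected number of deletions of $c$ from $L_e$ is governed by the number of cliques rather than by the full neighbor count.

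To turn these ``in expectation'' statements into a single good outcome I would prove concentration for each of $|L_e|$ and $d_e(c)$. These are functions of the independent tentative-color choices with bounded Lipschitz and certificate constants, so Talagrand's inequality gives deviation probabilities that are super-polynomially small in $\Delta$. I would then apply the Local Lemma to the bad events ``$e$'s palette shrank too much'' and ``some $d_e(c)$ failed to shrink.'' Each such event depends only on the tentative choices of edges within distance two of $e$ in $G$, a set of size polynomial in $P\Delta$, so once $\Delta$ is large enough (this is where $c(\epsilon,P)$ enters) the Talagrand tail beats the dependency count and a good round is guaranteed to exist. After $O(\log \Delta)$ rounds the uncolored edges have degree far below their remaining palette sizes, and I would finish with a direct greedy coloring, each uncolored edge still having an available color because $|L_e|$ exceeds its number of uncolored neighbors.

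The hard part is the concentration-versus-dependency balance in the last step. The bad events are long-range, since the line graph degree is $\Theta(P\Delta)$ and each palette can be affected by $\Theta(P^2\Delta^2)$ other edges, so a naive union bound is hopeless and the entire argument hinges on making the Talagrand tail dominate the polynomial dependency degree in the Local Lemma; this is exactly what forces $\Delta$ to be large relative to $\epsilon$ and $P$, as in the hypothesis. A secondary technical nuisance is verifying the bounded-differences and certifiability hypotheses for $d_e(c)$ cleanly, since changing one edge's tentative color can cascade, and calibrating the one-round shrink factors for the two quantities simultaneously so that the ratio invariant persists all the way down to the greedy endgame.
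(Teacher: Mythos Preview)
The paper does not give its own proof of this statement: Theorem~\ref{kahn-thm} is quoted as a background result from Kahn~\cite{kahn} and is used as a black box. So there is no in-paper argument to compare against.

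That said, your outline is a reasonable high-level sketch of the semi-random nibble approach that Kahn's original paper actually uses, and the structural observation you highlight (that in the line graph of a linear hypergraph each neighborhood is a union of at most $P$ near-disjoint cliques) is indeed the key reason $(1+\epsilon)\Delta$ colors suffice rather than $P\Delta$. One caution: Kahn's actual argument in \cite{kahn} is considerably more delicate than your sketch suggests, particularly in how the concentration is established and how the invariants are maintained across rounds; the ``bounded differences plus Talagrand plus LLL'' recipe you describe is closer to later streamlined presentations of the nibble (e.g., Molloy--Reed) than to Kahn's original martingale machinery. Your identification of the hard part---making the tail bounds beat the polynomial-in-$P\Delta$ dependency degree---is accurate, and this is exactly where the dependence of $c(\epsilon,P)$ on $P$ enters. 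As a proof plan it is credible, but filling in the concentration step for $d_e(c)$ with the cascading dependencies you flag is genuinely nontrivial and would require substantial additional work beyond what you have written.
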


Observe that if $P \leq O(1)$, then Theorem~\ref{kahn-thm} immediately shows Theorem~\ref{main-thm}, namely that $\ql(H) \leq \frac{n}{\rho-1} (1 + \epsilon)$. Indeed, Kahn's asymptotic proof of EFL \cite{kahn2} uses this proof strategy. However, Theorem~\ref{kahn-thm} does not give useful information when $P$ is increasing with $n$. 

In this second case, we follow a strategy of \cite{faber1} and use another graph property, \emph{local sparsity}, based on the triangle counts in the line graph.
\begin{definition}
A \emph{triangle} in a graph $G$ is a set of three vertices $v_1, v_2, v_3$ such that $(v_1, v_2) \in E, (v_1, v_3) \in E, (v_2, v_3) \in E$.
\end{definition}
\begin{theorem}[\cite{vu}]
  \label{vu-thm}
  Suppose that every vertex of $G$ has degree at most $d$, and every vertex participates in at most $f$ triangles. Then
$$
\chi_{\text{list}}(G) \leq O( \frac{d}{\log(d^2/f)} )
$$
\end{theorem}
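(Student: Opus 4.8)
The plan is to establish the equivalent list-coloring statement: there is an absolute constant $C$ so that if every vertex of $G$ is given a list of at least $\ell := C d/\log(d^2/f)$ admissible colors, then $G$ has a proper list-coloring. Since $\chi_{\text{list}}(G) \leq d+1$ always holds, the claim is trivial when $d^2/f = O(1)$, so I may assume $f \leq d^2/e^2$, which places us in the genuinely sparse regime. The main tool will be the semi-random (``nibble'') method. We color $G$ over a sequence of rounds; in each round every currently uncolored vertex independently proposes a color chosen uniformly from its current list, a proposed color is \emph{accepted} only if no neighbor proposes the same color, and we then delete every accepted color from the lists of its neighbors. This keeps the partial coloring proper at all times, and the question becomes whether the lists stay large enough relative to the uncolored degrees for the process to terminate successfully.

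Throughout I would track, after round $t$, a uniform lower bound $\ell_t$ on every residual list size and a uniform upper bound $D_t$ on the current degree in the uncolored subgraph. The degree decays predictably, $D_{t+1} \approx D_t(1-p)$, where $p$ is the per-round probability that a vertex becomes permanently colored. The subtle quantity is the shrinkage of a list $L(v)$, which loses exactly the \emph{distinct} colors newly accepted among the neighbors of $v$. Here sparsity is decisive: two neighbors $u,u'$ of $v$ that are non-adjacent may safely accept the same color, in which case they remove only one color from $L(v)$ rather than two, so a list shrinks strictly more slowly than the degree. The pairs of neighbors that fail to provide such a saving are precisely the adjacent ones, i.e.\ the triangles through $v$, of which there are at most $f$; thus $f$ directly measures the deficit between the list-decay rate and the degree-decay rate. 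Carrying out this two-sided comparison — lists must remain nonempty until the uncolored degree $D_t$ falls below the current list size, at which point an arbitrary greedy pass finishes the coloring since every remaining vertex then has more available colors than uncolored neighbors — shows that initial lists of size $\Theta(d/\log(d^2/f))$ suffice, and this accounting is what produces the logarithmic factor.

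The main obstacle is not these expected-value estimates but showing that $\ell_t$ and $D_t$ are concentrated around their means \emph{uniformly} over all vertices and colors, so that the invariants survive from one round to the next. I would establish this with the Lov\'{a}sz Local Lemma applied to the bad events that some list size or some color count deviates substantially from its expectation, using Talagrand's inequality (or a bounded-differences/Azuma estimate) to bound each such deviation probability. The delicate point is that the number of distinct colors removed from $L(v)$ depends on the proposals of up to $d^2$ vertices within distance two of $v$, so its concentration requires controlling the correlated contributions to the certificate; the triangle bound $f$ is again exactly what limits the size of these certificates and keeps each deviation probability small enough to beat the dependency degree in the Local Lemma. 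Once uniform concentration is in hand, iterating the single-round analysis until the greedy endgame applies completes the proof.
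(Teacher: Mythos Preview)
The paper does not prove this statement at all: Theorem~\ref{vu-thm} is quoted from \cite{vu} as a background fact and used as a black box, with no proof or proof sketch given. There is therefore nothing in the paper to compare your proposal against.

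For what it is worth, your outline is a reasonable high-level description of the semi-random method that underlies Vu's original argument: iterated random coloring with concentration via Talagrand/Azuma and the Local Lemma, tracking list sizes against uncolored degrees until a greedy finish applies. But since the present paper simply cites the result, any proof you supply is necessarily going beyond what the paper does rather than matching or diverging from it.
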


We note that \cite{aks} had earlier showed a similar bound $\chi(G) \leq O( \frac{d}{\log(d^2/f)} )$ for the ordinary chromatic number. However, our proofs really need list-coloring as a subroutine: even if our goal was only to bound the ordinary chromatic index $q(H)$, we would still need the list-coloring provided by Theorem~\ref{vu-thm}.

\section{Main result}
In this section, we prove Theorem~\ref{main-thm}. We let $L$ denote the line graph of $H$. Also, we let $\epsilon > 0$ be an arbitrary fixed quantity which we view as a \emph{constant} throughout the proof.

\begin{observation}
\label{obs1}
Every vertex $v \in H$ is in at most $\frac{n-1}{\rho-1}$ edges of $H$.
\end{observation}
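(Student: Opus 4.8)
The plan is to run a direct double-counting argument on the edges incident to a fixed vertex, exploiting linearity to force those edges to be nearly disjoint outside of $v$.

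First I would fix an arbitrary vertex $v \in V$ and let $e_1, \dots, e_k$ be the edges of $H$ that contain $v$; the goal is to show $k \leq \frac{n-1}{\rho-1}$. The key observation is that for any two distinct such edges $e_a, e_b$, linearity gives $|e_a \cap e_b| \leq 1$, while $v \in e_a \cap e_b$ forces $e_a \cap e_b = \{v\}$ exactly. Consequently, the ``punctured'' sets $e_1 \setminus \{v\}, \dots, e_k \setminus \{v\}$ are pairwise disjoint, and they are all contained in $V \setminus \{v\}$, a set of size $n - 1$.

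Next I would bring in the rank bound: each $e_j$ has $|e_j| \geq \rho$, so $|e_j \setminus \{v\}| \geq \rho - 1$. Summing over the $k$ pairwise-disjoint sets inside $V \setminus \{v\}$ yields $k(\rho - 1) \leq \sum_{j=1}^{k} |e_j \setminus \{v\}| \leq n - 1$, and dividing by $\rho - 1 > 0$ (valid since $\rho \geq i \geq 3$) gives $k \leq \frac{n-1}{\rho-1}$, as claimed.

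I do not expect any real obstacle here: the entire content is the disjointness of the punctured edges, which is an immediate consequence of the definition of a linear hypergraph. The only point worth stating carefully is that two edges sharing $v$ intersect in \emph{precisely} $\{v\}$ (so that removing $v$ leaves genuinely disjoint pieces), after which the bound is a one-line computation.
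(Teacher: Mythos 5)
Your proof is correct and is exactly the argument the paper intends: its one-line proof ("the edges containing $v$ do not share any other vertices") is shorthand for your observation that the punctured sets $e_j \setminus \{v\}$ are pairwise disjoint subsets of $V \setminus \{v\}$, each of size at least $\rho - 1$. You have simply written out the counting step that the paper leaves implicit.
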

\begin{proof}

Since $H$ is linear, the edges containing $v$ do not share any other vertices.
\end{proof}

\begin{proposition}
\label{gap-prop}
Suppose we are given a linear hypergraph $H = (V,E)$ and a partition $E = E_1 \sqcup E_2$. 
Let $P_1 = \max_{e_1 \in E_1} |e_1|$ and let $\rho_2 = \min_{e_2 \in E_2} |e_2|$.

Then
$$
\ql(H) \leq \max \Bigl(  \ql(E_2), \ql(E_1) + \frac{ (n-1) P_1}{\rho_2 - 1}  \Bigr)
$$
\end{proposition}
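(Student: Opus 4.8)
The plan is to build a list coloring of the line graph $L(H)$ in two stages: first color the vertices of $L(H)$ that come from $E_2$, and then extend greedily to the vertices coming from $E_1$. The mechanism that makes this succeed is the ``gap'' between $P_1$ and $\rho_2$: an edge $e_1 \in E_1$ has rank at most $P_1$, so it meets only a few of the (rank $\geq \rho_2$) edges of $E_2$, and hence only a few colors get used up around $e_1$ before we color it.

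In detail, set $k := \max\bigl(\ql(E_2),\ \ql(E_1) + \tfrac{(n-1)P_1}{\rho_2-1}\bigr)$; it suffices to properly color $L(H)$ from an arbitrary list assignment $(S_e)_{e \in E}$ with each $|S_e| \geq k$. I would first dispose of degenerate cases: we may assume $E_1, E_2 \neq \emptyset$ and $\rho_2 \geq 2$, since otherwise the right-hand side is infinite or the claim is immediate. As $k \geq \ql(E_2) = \ql(H(E_2))$, the subgraph of $L(H)$ induced on the vertex set $E_2$ — which is precisely $L(H(E_2))$ — admits a proper coloring $c$ drawn from the lists $(S_{e_2})_{e_2 \in E_2}$; fix one. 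Next, for each $e_1 \in E_1$, bound the number of edges of $E_2$ that meet $e_1$: applying Observation~\ref{obs1} to the hypergraph $H(E_2)$, which has $n$ vertices and minimum rank $\rho_2$, every vertex lies in at most $\tfrac{n-1}{\rho_2-1}$ edges of $E_2$, so summing over the at most $P_1$ vertices of $e_1$ shows that at most $\tfrac{(n-1)P_1}{\rho_2-1}$ edges of $E_2$ meet $e_1$ — and being an integer, at most $\lfloor \tfrac{(n-1)P_1}{\rho_2-1} \rfloor$ of them. Let $S'_{e_1}$ be $S_{e_1}$ with the colors $c(e_2)$ of those neighbors deleted; then $S'_{e_1}$ still has at least $\ql(E_1) = \ql(H(E_1))$ colors (a short case check on which argument attains the maximum defining $k$, using that $\ql(E_1)$ is an integer). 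Hence $L(H(E_1))$ admits a proper coloring from $(S'_{e_1})_{e_1 \in E_1}$; overlaying it with $c$ gives a coloring of all of $L(H)$ that is proper: conflicts within $E_1$ or within $E_2$ are excluded by construction, and for intersecting $e_1 \in E_1$ and $e_2 \in E_2$ the color of $e_1$ avoids $c(e_2)$ because $c(e_2)$ was removed from $S'_{e_1}$.

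I expect the only real care to lie in the last step's bookkeeping: Observation~\ref{obs1} must be applied to $H(E_2)$ rather than to $H$, so that the controlling minimum rank is $\rho_2$ (the quantity that actually appears in the bound), and one must verify that the reduced lists $S'_{e_1}$ never fall below size $\ql(E_1)$ no matter which of the two terms realizes the maximum $k$ — this is exactly where the integrality of $\ql(E_1)$ enters, through $\lfloor \ql(E_1) + x \rfloor = \ql(E_1) + \lfloor x \rfloor$. Everything else is routine.
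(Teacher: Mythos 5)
Your proposal is correct and follows essentially the same route as the paper: list-color $H(E_2)$ first, use Observation~\ref{obs1} to bound by $\frac{(n-1)P_1}{\rho_2-1}$ the number of $E_2$-edges meeting any $e_1 \in E_1$, and then color $H(E_1)$ from the residual lists. The extra bookkeeping about integrality and degenerate cases is fine but not needed beyond what the paper's argument already gives.
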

\begin{proof}
Suppose each edge has a palette of size $Q \geq \ql(E_2)$. Select an arbitrary list-coloring of $H(E_2)$. Consider the residual palette for each edge $e \in E_1$; that is, the palette available to $e$ after removing the colors selected by edges $f \in E_2$ with $f \cap e \neq \emptyset$. By Observation~\ref{obs1}, each vertex $v$ is in at most $\frac{n-1}{\rho_2-1}$ edges in $E_2$, and thus each edge $e \in E_1$ touches at most $\frac{(n-1) P_1}{\rho_2 - 1}$ edges in $E_2$. Thus, each edge in $E_1$ has a residual palette of size at least $Q - \frac{(n-1) P_1}{\rho_2 - 1}$.

As long as $Q - \frac{(n-1) P_1}{\rho_2 - 1} \geq \ql(E_1)$, we can list-color $H(E_1)$ with the residual palette, thus giving a full list-coloring of $H$. Thus, our coloring procedure succeeds as long as
$$
Q \geq  \ql(E_2) \qquad \text{and} \qquad Q \geq \frac{(n-1) P_1}{\rho_2 - 1} + \ql(E_1)
$$
\end{proof}

Given a hypergraph $H$, we let $A_i$ denote the set of edges $e \in E$ such that $2^i \leq |e| < 2^{i+1}$. Note that each edge $e \in E$ corresponds to a vertex of $L$, and for any $U \subseteq E$ we denote by $L[U]$ the induced subgraph of $L$ on the edges of $U$. 

\begin{proposition}
\label{prop1}
For any integer $i \geq 1$, we have
$$
\ql(A_i) \leq O\bigl( \frac{n}{i} + \frac{n}{\log(n/2^{2 i}))} \bigr)
$$
\end{proposition}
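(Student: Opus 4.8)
The plan is to apply Vu's theorem (Theorem~\ref{vu-thm}) to the line graph $L[A_i]$, using that $\ql(A_i) = \chi_{\text{list}}(L[A_i])$ by definition. This requires two ingredients: an upper bound $d$ on the maximum degree of $L[A_i]$, and an upper bound $f$ on the number of triangles through any single vertex of $L[A_i]$.

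The degree bound is immediate. Every edge of $H(A_i)$ has size at least $2^i$, so by Observation~\ref{obs1} each vertex lies in at most $\tfrac{n-1}{2^i-1}$ edges of $A_i$; since an edge $e \in A_i$ has fewer than $2^{i+1}$ vertices, it meets fewer than $2^{i+1} \cdot \tfrac{n-1}{2^i-1} \le 4(n-1)$ other edges of $A_i$. Thus we may take $d = 4n$.

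The heart of the argument is the triangle bound. Fix $e \in A_i$ and a triangle $\{e, e', e''\}$ of $L[A_i]$; since $H$ is linear, the three pairwise intersections are singletons, say $\{a\} = e \cap e'$, $\{b\} = e \cap e''$, $\{c\} = e' \cap e''$. If $a = b$, then $a \in e' \cap e''$ automatically, so these triangles are counted by choosing the vertex $a \in e$ and then an unordered pair of edges of $A_i$ through $a$ other than $e$; this gives at most $|e| \cdot \binom{(n-1)/(2^i-1)}{2} = O(n^2/2^i)$ of them. If instead $a \neq b$, a short check shows $c \notin e$ (otherwise $c \in e \cap e' = \{a\}$, forcing $a \in e''$ and hence $a = b$), and the key observation is that such a triangle is determined by the triple $(a,b,c)$: linearity forces $e'$ to be the unique edge through $\{a,c\}$ and $e''$ the unique edge through $\{b,c\}$. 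Since $a,b \in e$ and $c$ ranges over at most $n$ vertices, and each triangle yields two such triples, this case contributes at most $\tfrac12 |e|^2 n = O(n \cdot 2^{2i})$ triangles. Hence we may take $f = O(n^2/2^i + n \cdot 2^{2i})$.

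Feeding $d$ and $f$ into Theorem~\ref{vu-thm} gives $\ql(A_i) \le O\!\bigl(n / \log(d^2/f)\bigr)$, and it remains to simplify the denominator. From $x+y \le 2\max(x,y)$ we get $d^2/f = \Omega\!\bigl(\tfrac{n}{n/2^i + 2^{2i}}\bigr) = \Omega\!\bigl(\min(2^i,\, n/2^{2i})\bigr)$, and since the truncated logarithm satisfies $\log(\Omega(1)\cdot x) = \Omega(\log x)$, this yields $\log(d^2/f) = \Omega\!\bigl(\min(i,\, \log(n/2^{2i}))\bigr)$. Finally $\tfrac{1}{\min(i,\,\log(n/2^{2i}))} = \max\!\bigl(\tfrac1i,\, \tfrac{1}{\log(n/2^{2i})}\bigr) \le \tfrac1i + \tfrac{1}{\log(n/2^{2i})}$, which is the claimed bound. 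I expect the $a \neq b$ triangle count to be the main obstacle: the naive ``pick $e'$, then pick $e''$ meeting both $e$ and $e'$'' estimate is too lossy, and the needed savings comes exactly from the observation that specifying the three relevant vertices pins the triangle down.
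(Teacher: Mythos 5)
Your proof is correct and follows essentially the same route as the paper: bound the degree of $L[A_i]$ by $O(n)$ via Observation~\ref{obs1}, split the triangles through an edge into those concurrent at one vertex ($O(n^2/2^i)$) and those with three distinct intersection points ($O(n\,2^{2i})$, using linearity to pin down the edges once the intersection vertices are fixed), and then apply Theorem~\ref{vu-thm}. Your parametrization of the second triangle type by the vertex triple $(a,b,c)$ rather than by ``choose $e'$, then the two intersection vertices'' is a cosmetic difference only.
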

\begin{proof}
By Observation~\ref{obs1}, every vertex in $H$ touches $O(n/2^i)$ edges in $A_i$. As each edge in $A_i$ contains at most $2^{i+1}$ vertices, it follows that $L[A_i]$ has maximum degree $O(n)$.

Now, consider some edge $e \in A_i$; we want to count how many triangles of $L$ it participates in. There are two types of triangles. The first involves three edges around a single vertex; for each $v \in e$ there are at most $n/2^i$ choices for the other two edges, giving a total triangle count of at most $2^{i+1} \times (n/2^i)^2 = O(n^2/2^i)$.

The second type of triangle involves three edges, each intersecting at a distinct point. There are $O(n)$ choices for the second edge $e'$. By linearity, for any given choice of vertices $v \in e, v' \in e'$ there is at most one possible choice for the third edge $e''$ intersecting $e$ at $v$ and $e'$ at $v'$. Hence, the total number of triangles of this second type is at most $n \times (2^{i+1})^2 \leq O(n 2^{2i})$.

Now, applying Theorem~\ref{vu-thm}, we see that
\begin{align*}
\ql(A_i) &\leq O( \frac{n}{\log( \frac{n^2}{O(n^2/2^i + n 2^{2i})})} ) \leq O( \frac{n}{i} + \frac{n}{\log(n/2^{2i})})
\end{align*}
\end{proof}

The remainder of this proof will be expressed in terms of an integer parameter $k$ (which does not depend on $n$), and which we will set later in the construction.

\begin{observation}
\label{obs2}
For each integer $k \geq 1$, there is some integer $N_k$, such that whenever $n \geq N_k$ it holds that
$$
\ql(A_1 \cup A_2 \cup \dots \cup A_k) \leq (1 + \epsilon) \frac{n}{\rho-1}
$$
\end{observation}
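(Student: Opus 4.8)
The plan is to apply Kahn's theorem (Theorem~\ref{kahn-thm}) directly to the sub-hypergraph $H' := H(A_1 \cup \dots \cup A_k)$. Every edge of $H'$ has size at most $2^{k+1}-1$, so writing $P^\star := 2^{k+1}-1$ — which is a \emph{constant}, since $k$ does not depend on $n$ — the hypergraph $H'$ is linear with maximum rank at most $P^\star$. The point that makes this work is that Theorem~\ref{kahn-thm} measures the degree as the number of edges through a vertex, and by Observation~\ref{obs1} every vertex lies in at most $\frac{n-1}{\rho-1}$ edges of $H$, hence in at most that many edges of $H'$. Thus Theorem~\ref{kahn-thm}, invoked with rank parameter $P^\star$ and slack $\epsilon$, should give $\ql(H') \le \frac{n-1}{\rho-1}(1+\epsilon) < (1+\epsilon)\frac{n}{\rho-1}$ — provided the degree hypothesis of that theorem is met.

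The one point needing care is precisely that hypothesis: Theorem~\ref{kahn-thm} requires the degree to be at least a threshold $c = c(\epsilon, P)$, and is vacuous for hypergraphs of small degree. I would set $c := \max_{2 \le P \le P^\star} c(\epsilon, P)$ (a finite maximum, since the range of $P$ is finite) and split on $\Delta := \max_{v} \deg_{H'}(v)$. If $H'$ has no edges, then $\ql(H') = 0$ and there is nothing to prove; otherwise $H'$ contains an edge of size in $[\rho, P^\star]$, so in particular $\rho \le P^\star$.

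If $\Delta \ge c$, then since $H'$ is linear with maximum rank at most $P^\star$ and maximum degree $\Delta \ge c \ge c(\epsilon, P')$ where $P'$ is the maximum rank of $H'$, Theorem~\ref{kahn-thm} gives $\ql(H') \le \Delta(1+\epsilon) \le \frac{n-1}{\rho-1}(1+\epsilon) \le (1+\epsilon)\frac{n}{\rho-1}$, which holds for every $n$. If instead $\Delta < c$, then every vertex of $H'$ lies in fewer than $c$ edges and each edge has at most $P^\star$ vertices, so by linearity the line graph $L[A_1 \cup \dots \cup A_k]$ has maximum degree at most $P^\star c$; a greedy list coloring then shows $\ql(H') = \chi_{\text{list}}(L[A_1 \cup \dots \cup A_k]) \le c'$ for some constant $c' = c'(k, \epsilon)$. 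Since $\rho \le P^\star$ in this situation, $(1+\epsilon)\frac{n}{\rho-1} \ge \frac{n}{P^\star - 1} \ge c'$ as soon as $n \ge N_k := c'(P^\star - 1)$, and the claimed bound follows in both cases whenever $n \ge N_k$.

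I do not expect a genuine obstacle here: the argument is essentially a clean application of Theorem~\ref{kahn-thm} together with the degree bound of Observation~\ref{obs1}, exploiting that bounded $k$ makes the rank a constant. The only subtlety worth flagging is the bookkeeping around the threshold $\Delta \ge c$ — when it fails one must notice that the line graph has bounded degree \emph{and} that $\rho$ is then itself bounded by $P^\star$, which is exactly what lets the constant $c'$ be absorbed into $(1+\epsilon)\frac{n}{\rho-1}$ for $n$ large, thereby producing the required $N_k$.
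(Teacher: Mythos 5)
Your proof is correct and takes essentially the same route as the paper: apply Theorem~\ref{kahn-thm} to $H(A_1 \cup \dots \cup A_k)$, whose maximum rank is the constant $2^{k+1}-1$ and whose maximum degree is at most $\frac{n-1}{\rho-1}$ by Observation~\ref{obs1}. You are in fact more careful than the paper's two-line proof, since you explicitly handle the low-degree case $\Delta < c(\epsilon,P)$ where Kahn's theorem is vacuous --- which is precisely the role of the threshold $N_k$ in the statement.
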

\begin{proof}
By Observation~\ref{obs1}, $L[A_1 \cup A_2 \cup \dots \cup A_k]$ has maximum degree $\frac{n-1}{\rho-1}$ and maximum rank $2^{k+1} - 1$. Now apply Theorem~\ref{kahn-thm}.
\end{proof}

\begin{corollary}
\label{cor1}
If $i \geq k$ and $P \leq \sqrt{n e^{-k}}$, then $\ql(A_i) \leq O(n/k)$.
\end{corollary}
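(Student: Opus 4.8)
The plan is to obtain this as a direct specialization of Proposition~\ref{prop1}, using the hypothesis $P \leq \sqrt{n e^{-k}}$ only to control the $\log$ term. First I would dispose of a degenerate case: if $2^i > P$, then no edge of $H$ has size in $[2^i, 2^{i+1})$, so $A_i = \emptyset$, the line graph $L[A_i]$ has no vertices, and $\ql(A_i) = 0 \leq O(n/k)$ trivially. Hence we may assume $2^i \leq P$, which together with $P \leq \sqrt{n e^{-k}}$ gives $2^i \leq \sqrt{n e^{-k}}$, i.e. after squaring, $2^{2i} \leq n e^{-k}$, equivalently $n / 2^{2i} \geq e^k$.

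Next I would feed this into the truncated logarithm appearing in Proposition~\ref{prop1}. Since $k \geq 1$ we have $n/2^{2i} \geq e^k \geq e$, so the truncation is inactive and $\log(n/2^{2i}) = \ln(n/2^{2i}) \geq \ln(e^k) = k$; therefore $\frac{n}{\log(n/2^{2i})} \leq \frac{n}{k}$. Separately, the hypothesis $i \geq k$ gives $\frac{n}{i} \leq \frac{n}{k}$. Combining these two estimates with Proposition~\ref{prop1} yields
$$
\ql(A_i) \;\leq\; O\Bigl( \frac{n}{i} + \frac{n}{\log(n/2^{2i})} \Bigr) \;\leq\; O\Bigl( \frac{n}{k} + \frac{n}{k} \Bigr) \;=\; O(n/k),
$$
which is the claim.

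I do not expect any genuine obstacle here: the corollary is essentially a change of variables that replaces both error terms of Proposition~\ref{prop1} by $n/k$. The only things requiring care are the (trivial) empty-$A_i$ case and checking that the rank hypothesis $P \le \sqrt{ne^{-k}}$ indeed forces $n/2^{2i}$ above the truncation threshold $e$, so that the bound $\ln(n/2^{2i}) \ge k$ is legitimate.
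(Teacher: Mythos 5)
Your proposal is correct and matches the paper's own proof: both dispose of the case where $A_i$ is empty and otherwise use $2^i \leq P \leq \sqrt{n e^{-k}}$ to bound $\log(n/2^{2i}) \geq k$ and $i \geq k$ to bound the $n/i$ term, then invoke Proposition~\ref{prop1}. Your handling of the boundary case is in fact slightly more careful than the paper's (which claims $A_i = \emptyset$ already when $i \geq \log_2 P$, overlooking that $i = \log_2 P$ can still admit edges of size exactly $P$).
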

\begin{proof}
This is vacuously true for $i \geq \log_2 P$, since then $A_i$ is empty. Otherwise, Proposition~\ref{prop1} gives $
\ql(A_i) \leq O( \frac{n}{i} + \frac{n}{\log(n 2^{-2i})} ) \leq O( \frac{n}{k} + \frac{n}{\log(n P^{-2})}) \leq O(n/k)$.
\end{proof}
\begin{proposition}
\label{prop2}
Suppose $i \geq k$ and $P \leq \sqrt{n e^{-k}}$. Let $x = \lceil \log_2 k \rceil$. Then 
$$
\ql(A_i \cup A_{i+x} \cup A_{i+2x} \cup A_{i+3x} \dots) \leq O(n/k)
$$
\end{proposition}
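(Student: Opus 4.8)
The plan is to peel the sets $A_i, A_{i+x}, A_{i+2x}, \dots$ off one scale at a time, invoking Proposition~\ref{gap-prop} for each peel and Corollary~\ref{cor1} to control the set being peeled off. The spacing $x = \lceil \log_2 k \rceil$ is chosen precisely so that two consecutive sets in this list have edge sizes differing by a factor of at least $2^x \ge k$, and this multiplicative gap is exactly what will force the error term produced by each application of Proposition~\ref{gap-prop} to be $O(n/k)$.

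In detail, I would write $B_t = A_{i+tx}$. If $k = 1$ then $x = 0$ and the union in question is simply $A_i$, so the bound is immediate from Corollary~\ref{cor1}; so assume $k \ge 2$, hence $x \ge 1$. Since $A_j = \emptyset$ once $2^j > P$ and $P \le \sqrt{n e^{-k}}$ is finite, only finitely many $B_t$ are nonempty; let $m$ be the largest index with $B_m \neq \emptyset$ (the statement is trivial if there is none), and for $0 \le t \le m$ set $F_t = B_t \cup B_{t+1} \cup \dots \cup B_m$, so that $F_0$ is the union to be bounded and $F_m = B_m$. For each $t < m$ I would apply Proposition~\ref{gap-prop} to the partition $F_t = E_1 \sqcup E_2$ with $E_1 = B_t$ and $E_2 = F_{t+1}$: here $P_1 < 2^{i+tx+1}$, while every edge appearing in $F_{t+1}$ has size at least $2^{i+(t+1)x}$, so $\rho_2 - 1 \ge 2^{i+(t+1)x} - 1 \ge 2^{i+(t+1)x-1}$. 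This gives
$$
\frac{(n-1)P_1}{\rho_2 - 1} \;<\; \frac{(n-1)\,2^{i+tx+1}}{2^{i+(t+1)x-1}} \;=\; \frac{4(n-1)}{2^x} \;\le\; \frac{4n}{k},
$$
so Proposition~\ref{gap-prop} yields $\ql(F_t) \le \max\bigl(\ql(F_{t+1}),\ \ql(B_t) + 4n/k\bigr)$. (If some intermediate $B_t$ is empty, then $F_t = F_{t+1}$ and that step is vacuous.)

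To finish, I would note that every nonempty $B_t = A_{i+tx}$ meets the hypotheses of Corollary~\ref{cor1}, since $i + tx \ge i \ge k$ and $P \le \sqrt{n e^{-k}}$; hence $\ql(B_t) \le c_0\, n/k$ for an absolute constant $c_0$ independent of $t$. Plugging this into the recursion and inducting downward from $t = m$ then gives $\ql(F_t) \le (c_0 + 4)\,n/k$ for every $t$: the base case $\ql(F_m) = \ql(B_m) \le c_0 n/k$ holds, and the inductive step only takes a maximum of two quantities already bounded by $(c_0 + 4)n/k$, so the constant does not degrade with the number $m+1$ of scales involved. Taking $t = 0$ gives $\ql(A_i \cup A_{i+x} \cup A_{i+2x} \cup \dots) = \ql(F_0) \le (c_0+4)n/k = O(n/k)$. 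The point I expect to require the most care is exactly this last one: because Proposition~\ref{gap-prop} combines palettes through a maximum rather than a sum, chaining together the (possibly $\Theta(\log P)$ many) scales costs nothing in the implied constant, whereas a naive variant that summed the per-step errors would lose a factor of about $\log P$. Establishing the displayed gap inequality, which is the only place where the value $x = \lceil \log_2 k\rceil$ is used, is otherwise the sole computation.
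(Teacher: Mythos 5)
Your proposal is correct and follows essentially the same route as the paper: the same downward induction peeling off one scale $A_{i+tx}$ at a time via Proposition~\ref{gap-prop}, with Corollary~\ref{cor1} bounding each peeled layer and the gap $2^x \geq k$ delivering the $4n/k$ error term (the paper indexes the tails from $s = \lceil \log_2 n\rceil$ downward rather than from the last nonempty scale, but this is cosmetic). Your explicit handling of the degenerate case $k=1$, $x=0$ is a small point the paper leaves implicit.
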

\begin{proof}
Let $s = \lceil \log_2 n \rceil$. For each integer $j \geq 0$, define
$$
B_j = A_{i + s x} \cup A_{i + (s-1) x} \cup A_{i + (s-2) x} \cup \dots \cup A_{i + (j+1) x} \cup A_{i + j x}
$$

We will prove that $\ql(B_j) \leq c n/k$ for some sufficiently large constant $c$ and $j \geq 0$, by induction downward on $j$. 

When $j = s$, this is vacuously true as $B_j = \emptyset$. For the induction step, we use Proposition~\ref{gap-prop} using the decomposition $E_1 = A_{i+ j x}$ and $E_2 = A_{i+(j+1) x} \cup A_{i+(j+2) x} \cup \dots \cup A_{i+ s x} = B_{j+1}$.

Note that $P_1 \leq 2^{i+j x+1} - 1$ and $\rho_2 \geq 2^{i + (j+1) x} $. By induction hypothesis, $\ql(E_2) = \ql(B_{j+1}) \leq c n / k$. By Corollary~\ref{cor1}, $\ql(E_1) = \ql(A_{i + j x}) \leq c' n/k$, for some constant $c' \geq 0$. This gives
\begin{align*}
\ql(B_j) &\leq \max( \ql(E_2), \ql(E_1) + \frac{ (n-1) P_1}{\rho_2 - 1} ) \\
&\leq \max( c n/k, c' n/k + \frac{(n-1) (2^{i+j x + 1} - 1) }{2^{i+(j+1) x} - 1} ) \\
&\leq \max( c n/k, c' n/k + 4 n/k )
\end{align*}

which is at most $c n / k$ when $c \geq c' + 4$.
\end{proof}

\begin{proposition}
\label{prop3}
For each integer $k \geq 1$, there is some integer $N'_k$  such that  whenever $n > N'_k$ and $P \leq \sqrt{n e^{-k}}$ it holds that
$$
\ql(A_k \cup A_{k+1} \cup A_{k+2} \cup \dots )\leq O( \frac{n \log k}{k} )
$$
\end{proposition}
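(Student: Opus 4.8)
The plan is to split the edge set $A_k \cup A_{k+1} \cup \cdots$ into $x = \lceil \log_2 k \rceil$ residue classes modulo $x$ of the index, color each class separately using Proposition~\ref{prop2}, and then stitch the $x$ colorings together with disjoint palettes. Concretely, for each residue $r \in \{0, 1, \dots, x-1\}$ with $k + r$ still a valid index, let $\mathcal{F}_r = A_{k+r} \cup A_{k+r+x} \cup A_{k+r+2x} \cup \cdots$. Then $A_k \cup A_{k+1} \cup \cdots = \mathcal{F}_0 \sqcup \mathcal{F}_1 \sqcup \cdots \sqcup \mathcal{F}_{x-1}$. Each $\mathcal{F}_r$ is exactly the kind of "geometrically spaced" union handled by Proposition~\ref{prop2}, instantiated with $i = k + r$ (note $i \geq k$ holds, and the hypothesis $P \leq \sqrt{n e^{-k}}$ is the one we have assumed), so $\ql(\mathcal{F}_r) \leq O(n/k)$.

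Next I would combine the classes. Since the $\mathcal{F}_r$ are pairwise disjoint as edge sets, but edges in different classes can still intersect in $H$, I cannot simply color them independently with the same palette. Instead I give each class $\mathcal{F}_r$ its own block of $O(n/k)$ colors, drawn from disjoint color ranges; restricting any edge's palette to its own class's block and list-coloring $H(\mathcal{F}_r)$ within that block (which is possible since the block has size $\geq \ql(\mathcal{F}_r)$) yields a proper coloring of all of $H(A_k \cup A_{k+1} \cup \cdots)$, because two intersecting edges in the \emph{same} class get distinct colors by that class's coloring, and two intersecting edges in \emph{different} classes get colors from disjoint ranges. The total number of colors used is $x \cdot O(n/k) = O(\lceil \log_2 k \rceil \cdot n/k) = O(\frac{n \log k}{k})$, as claimed. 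The threshold $N'_k$ appears because Proposition~\ref{prop2} (via Corollary~\ref{cor1} and Proposition~\ref{prop1}) needs $n$ large enough relative to $k$; take $N'_k$ to be the max of the thresholds implicit in the $x$ applications.

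The only subtlety — and the step I would be most careful about — is bookkeeping the index ranges: $\mathcal{F}_r$ is defined only for $r$ such that there exists an edge of rank $\geq 2^{k+r}$, and I should check that every nonempty $A_m$ with $m \geq k$ lands in exactly one $\mathcal{F}_r$, namely $r \equiv m - k \pmod x$. Since $x \leq 2\log_2 k = O(\log k)$ (using $\lceil \log_2 k \rceil \leq \log_2 k + 1$ and absorbing constants), the multiplicative overhead is $O(\log k)$ and the final bound is $O(\frac{n \log k}{k})$. Everything else is a direct quotation of Proposition~\ref{prop2} together with the elementary disjoint-palette argument, so there is no real analytic obstacle here; the work is entirely in the decomposition.
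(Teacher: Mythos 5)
Your decomposition into residue classes $\mathcal{F}_r = A_{k+r} \cup A_{k+r+x} \cup \cdots$ for $r = 0,\dots,x-1$ with $x = \lceil \log_2 k\rceil$ is exactly the one the paper uses, and the application of Proposition~\ref{prop2} to each class is correct. The gap is in how you stitch the classes together. Giving each class ``its own block of $O(n/k)$ colors, drawn from disjoint color ranges'' is the standard argument for the \emph{ordinary} chromatic number, where every vertex shares the single palette $\{1,\dots,Q\}$; it does not work for list coloring. In the list setting each edge of $H$ arrives with its own adversarially chosen palette of size $Q = \Theta(n\log k/k)$ taken from an arbitrary color universe, and there is no fixed global partition of colors into blocks for which you can guarantee that an edge of class $\mathcal{F}_r$ has $\ql(\mathcal{F}_r)$ of \emph{its own} list colors inside block $r$: an adversary can give an edge of $\mathcal{F}_0$ a list consisting entirely of colors you placed in block $1$, leaving its restricted palette empty. (That this kind of subadditivity over a vertex partition genuinely fails for list coloring is shown by $K_{n,n}$: each side induces an edgeless graph with list chromatic number $1$, yet the list chromatic number of $K_{n,n}$ tends to infinity.)

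The paper repairs exactly this step probabilistically: partition the set of all colors into $x$ classes uniformly at random; for each edge $e$ and class $r$, the number of class-$r$ colors in $e$'s palette is a sum of independent indicators with mean at least $cn/(2k)$, so by a Chernoff bound it is at least $cn/(4k)$ except with probability $e^{-\Theta(n/k)}$. A union bound over the at most $n^2$ edges and $x$ classes shows that for $n > N'_k$ some partition gives every edge at least $cn/(4k)$ colors of every class, and then each $\mathcal{F}_r$ is list-colored from the class-$r$ portions of the palettes via Proposition~\ref{prop2}, with cross-class conflicts impossible since the color classes are disjoint. This probabilistic step is also where the threshold $N'_k$ actually comes from (not from Corollary~\ref{cor1} or Proposition~\ref{prop1}, whose requirements are absorbed into the $O(\cdot)$ constants). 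With this replacement your argument coincides with the paper's.
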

\begin{proof}
Suppose every edge begins with a palette of size $\frac{c n \log k}{k}$ for some constant $c$. Let $x = \lceil \log_2 k \rceil$. We randomly partition the colors into $x$ classes. For any class $i$ and edge $e$, let $Q_{i,e}$ denote the number of class-$i$ colors in the palette of edge $e$.

We have $\bE[Q_{i,e}] \geq \frac{c n \log k}{k x} \geq \frac{c n}{2 k}$. Furthermore, $Q_{i,e}$ is the sum of independent random variables (whether each color in the palette of $e$ goes into $Q_{i,e}$), and so by Chernoff's bound we have $$
\Pr( Q_{i,e} \leq \frac{c n}{4 k}) \leq e^{-\Theta(n/k)}
$$
For fixed $k$, this is smaller than $n^2 k/2$ for sufficiently large $n$. Since there are at most $n^2$ edges and $k$ classes, by the union bound there is a positive probability that every edge has at least $\frac{c n}{4 k}$ colors of each class in its palette.

Next, for each $i$ in the range $i = 1, \dots, x$, we use the class-$i$ colors to color $L[A_{k+i} \cup A_{k+i+x} \cup A_{k+i+2 x} \cup \dots ]$. By Proposition~\ref{prop2} this succeeds for $c$ sufficiently large.
\end{proof}

We are now ready to prove Theorem~\ref{main-thm}.
\begin{proof}[Proof of Theorem~\ref{main-thm}]
We will prove that $\ql(H) \leq \frac{n}{i-1} (1 + O(\epsilon))$; the result follows easily by rescaling.

By Proposition~\ref{prop3}, there is some constant $c$ such that for any integer $k > 1$ and $n > N'_k$ we have $\ql(A_k \cup A_{k+1} \cup A_{k+2} \cup \dots )\leq \frac{c n \log k}{k}$. We select $k$ sufficiently large so that $$
\frac{c \log k}{k} \leq \frac{\epsilon}{i-1}
$$

By Observation~\ref{obs2}, for $n > N_k$ we have $$
\ql(A_1 \cup A_2 \cup \dots \cup A_{k-1}) \leq \frac{n (1+\epsilon)}{i-1}
$$

Now, suppose every edge has a palette of size $Q = \frac{(1 + 3 \epsilon) n}{i-1}$. Randomly partition the colors into two classes, where a color goes into class I with probability $p = \frac{1.5 n \epsilon}{Q}$ and goes into class II with probability $1-p$. A Chernoff bound argument similar to Proposition~\ref{prop3} shows that for $n > N''$, with positive probability every edge has at least $n \epsilon$  class-I colors and $\frac{n}{i-1}(1 + \epsilon)$ class-II colors. In such a case, we can color $L[A_k \cup A_{k+1} \cup A_{k+2} \cup \dots ]$ using class-I colors and $L[A_1 \cup A_2 \cup \dots \cup A_{k-1}]$ using class-II colors.

So far, we have shown that there is an integer $k$ and integer $M = \max(N_k, N'_k, N''_k)$ such that whenever $P \leq \sqrt{n e^{-k}}$ and $n > M$ that $\ql(H) \leq \frac{n}{i-1}(1 + O(\epsilon))$. Set $C_{i, \epsilon} = \min(e^{-k/2}, \frac{1}{\sqrt{M}})$. 

When $P \leq C_{i, \epsilon} \sqrt{n}$, then $P \leq \sqrt{n e^{-k}}$. Also, when $P \leq C_{i, \epsilon} \sqrt{n}$, we have $P \leq \sqrt{\frac{n}{M}}$. Since $P \geq i \geq 2$, this implies $n \geq 4 M$. So $\ql(H) \leq \frac{n}{i-1}(1 + O(\epsilon))$.
\end{proof}

\section{Dependence of $n$ on $P$}
\label{lb-sec}

In this section, we show (roughly speaking) that Theorem~\ref{main-thm} must include a condition of the form $P \leq C_{i,\epsilon} \sqrt{n}$ where $C_{i,\epsilon} \leq  \sqrt{\frac{1+\epsilon}{i-1}}$.
\begin{proposition}
\label{lb-prop1}
For every real number $x$ in the range $(0,1)$ and every $\delta > 0$, there is some integer $N$ such that, for all $n \geq N$, there are linear hypergraphs $H$ on $n$ vertices which satisfy the following properties:
\begin{enumerate}
\item $\ql(H) > x n$
\item $P = \rho = r$
\item $\sqrt{n x} \leq r \leq (1+\delta) \sqrt{n x}$
\end{enumerate} 
\end{proposition}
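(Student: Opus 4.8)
The plan is to realize $H$ as a projective plane of prime order, padded out with isolated vertices. Recall that for every prime $p$ the projective plane of order $p$ is a linear hypergraph on $p^2+p+1$ vertices having $p^2+p+1$ edges, each of size exactly $p+1$, with any two distinct edges meeting in exactly one point. Since every pair of its edges intersects, its line graph is the complete graph $K_{p^2+p+1}$, so its list edge chromatic number equals $\chi_{\text{list}}(K_{p^2+p+1})=p^2+p+1$; for the lower bound we will only use $\chi(K_{p^2+p+1})=p^2+p+1$.

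It is convenient to first shrink $\delta$. Decreasing $\delta$ only narrows the window in item~(3), so a family of hypergraphs witnessing the proposition for a given $\delta$ also witnesses it for every larger value. Hence I may assume $(1+\delta)^2 x<1$, which is possible since $x<1$.

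Now fix a large $n$, put $t=\sqrt{nx}$, and choose a prime $p$ in the interval $[\lceil t\rceil,\ \lfloor(1+\delta)t\rfloor-1]$. Because $x<1$ this interval has length $\Theta(\delta t)\to\infty$, so by the prime number theorem (equivalently, since consecutive primes satisfy $p_{k+1}/p_k\to 1$) such a prime exists once $n$ exceeds a threshold $N=N(x,\delta)$. Let $H$ consist of the projective plane of order $p$ together with $n-(p^2+p+1)$ isolated vertices; this is well defined since $p+1\le(1+\delta)t$ forces $p^2+p+1\le(1+\delta)^2 nx<n$.

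Finally I would verify the three properties. Linearity and $P=\rho=r$ with $r=p+1$ come directly from the incidence axioms of the projective plane, and adjoining isolated vertices changes neither; this gives item~(2). From $p\ge\lceil t\rceil\ge t$ we get $(p+1)^2>p^2+p+1>t^2=nx$, so $r=p+1>\sqrt{nx}$, while $r=p+1\le\lfloor(1+\delta)t\rfloor\le(1+\delta)\sqrt{nx}$ by the choice of $p$; this is item~(3). Item~(1) holds because $\ql(H)=\chi_{\text{list}}(L(H))=p^2+p+1>t^2=nx$, using that $L(H)=K_{p^2+p+1}$. The only non-elementary ingredient, and the step that deserves the most care, is guaranteeing a prime in a window of relative width $\delta$ around $\sqrt{nx}$ for \emph{every} sufficiently large $n$; this is exactly where the prime number theorem enters, and it is also why the statement is naturally phrased with a threshold $N$ rather than holding for all $n$.
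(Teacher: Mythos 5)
Your proof is correct and follows essentially the same route as the paper: a projective plane of prime order padded with isolated vertices, whose line graph is complete and hence forces $\ql(H)=p^2+p+1>xn$. The only (immaterial) difference is that you locate a prime anywhere in the window $[\sqrt{nx},(1+\delta)\sqrt{nx}]$ via the prime number theorem, whereas the paper takes the smallest prime above $\sqrt{nx}$ and invokes a sublinear prime-gap bound $q\le u+u^{\theta}$; your version actually needs slightly weaker number-theoretic input.
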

\begin{proof}
Consider a finite projective plane with parameter $q$; this gives a linear hypergraph $H'$ with $q^2 + q + 1$ vertices and every edge of rank $r = q + 1$, and $\ql(H') = q^2 + q + 1$. Assuming that $q^2 + q + 1 \leq n$, we can form $H$ from $H'$ by adding $n - (q^2 + q + 1)$ isolated vertices.

Let $u = \sqrt{n x}$ and let $q$ be the smallest prime with $q \geq u$. For $n$ (and hence $u$ sufficiently large), this $q$ satisfies $q \leq u + u^{\theta}$ for some constant $\theta < 1$. 
In particular, $q^2 + q + 1 \leq n x + o(n) \leq n$ for $n$ sufficiently large. Our choice of $q$ ensures that that $\ql(H') = q^2 + q + 1 \geq u^2 + u + 1 > x n$. Also, $r > \sqrt{ n x}$ and $r/\sqrt{n} \rightarrow \sqrt{x}$ as $n \rightarrow \infty$.
\end{proof}

\begin{proposition}
\label{lb-prop2}
For any integer $i \geq 3$ and $\epsilon \in (0,1)$, the term ``$C_{i,\epsilon} \sqrt{n}$'' in Theorem~\ref{main-thm} cannot be replaced by any expression of the form $f(i, \epsilon, n)$, where for any fixed $i, \epsilon, \delta$ there are infinitely many $n$ with
$$
f(i,\epsilon,n) > (1 + \delta) \sqrt{\frac{(1+\epsilon) n}{i-1}}
$$
\end{proposition}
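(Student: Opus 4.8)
The plan is to obtain a contradiction directly from Proposition~\ref{lb-prop1}: if Theorem~\ref{main-thm} held with some such $f$ in place of ``$C_{i,\epsilon}\sqrt n$'', then the projective-plane hypergraphs supplied by Proposition~\ref{lb-prop1} would violate it for infinitely many $n$. Concretely, suppose for some integer $i \ge 3$ and some $\epsilon \in (0,1)$ that the conclusion of Theorem~\ref{main-thm} remained valid with ``$C_{i,\epsilon}\sqrt n$'' replaced by a function $f = f(i,\epsilon,n)$ having the stated growth property; that is, suppose that for every $n$, every linear hypergraph $H$ on $n$ vertices with minimum rank $\rho \ge i$ and maximum rank $P \le f(i,\epsilon,n)$ satisfies $\ql(H) \le (1+\epsilon)\frac{n}{i-1}$. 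I will show this is impossible.

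First I set $x := \frac{1+\epsilon}{i-1}$ and record that $x \in (0,1)$: indeed $x > 0$, while $i - 1 \ge 2$ and $1 + \epsilon < 2$ force $x < 1$. (This is exactly the place where the hypotheses $i \ge 3$ and $\epsilon < 1$ enter.) Applying the growth property of $f$ with $\delta = 1$, there is an infinite set $S$ of positive integers with $f(i,\epsilon,n) > 2\sqrt{x n}$ for all $n \in S$. Next I apply Proposition~\ref{lb-prop1} to this value of $x$ with its slack parameter taken to be $\tfrac12$: this yields an integer $N$ such that for every $n \ge N$ there is a linear hypergraph $H$ on $n$ vertices with $\ql(H) > x n$, with $P = \rho = r$ for some $r$, and with $\sqrt{x n} \le r \le \tfrac32 \sqrt{x n}$.

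Now I restrict attention to the (still infinite) set of $n \in S$ that additionally satisfy $n \ge N$ and $\sqrt{x n} \ge i$ — the last condition holding for all large $n$ since $\sqrt{xn}\to\infty$ — and take the corresponding hypergraph $H$. On one hand, $\rho = r \ge \sqrt{x n} \ge i$ and $P = r \le \tfrac32\sqrt{xn} < 2\sqrt{xn} < f(i,\epsilon,n)$, so $H$ meets the hypotheses of the assumed strengthening of Theorem~\ref{main-thm}; on the other hand, $\ql(H) > x n = (1+\epsilon)\frac{n}{i-1}$, contradicting its conclusion. Hence no such $f$ can exist.

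The argument is purely a matter of bookkeeping once Proposition~\ref{lb-prop1} is in hand, so I do not anticipate a genuine obstacle. The only point requiring a little care is to keep the slack parameter fed to Proposition~\ref{lb-prop1} strictly below the $\delta$ used in the growth hypothesis on $f$ — here $\tfrac12 < 1$ — so that the resulting bound $P = r \le \tfrac32\sqrt{xn}$ sits safely under $f(i,\epsilon,n) > 2\sqrt{xn}$ for the $n \in S$; and to remember that Proposition~\ref{lb-prop1} only produces hypergraphs once $n$ exceeds a threshold, which is harmless since we are free to discard finitely many elements of the infinite set $S$.
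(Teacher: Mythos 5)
Your proposal is correct and follows essentially the same route as the paper: apply Proposition~\ref{lb-prop1} with $x = \frac{1+\epsilon}{i-1} \in (0,1)$ and observe that for infinitely many large $n$ the resulting projective-plane hypergraph satisfies $\rho \geq i$ and $P \leq f(i,\epsilon,n)$ yet has $\ql > \frac{(1+\epsilon)n}{i-1}$. The only cosmetic difference is that you instantiate the growth hypothesis at $\delta = 1$ and feed slack $\tfrac12$ to Proposition~\ref{lb-prop1}, whereas the paper uses the same $\delta$ in both places; either bookkeeping works.
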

\begin{proof}
Apply Proposition~\ref{lb-prop1} with $x = (1+\epsilon)/(i-1)$; note that since $i \geq 3$ and $\epsilon < 1$ we have $x \in (0,1)$. This ensures that for all $n > N$, there is a hypergraph $H_n$ with $\ql(H_n) > x n$ and rank $\rho = P = r$ for $\sqrt{n x} \leq r \leq (1+\delta) \sqrt{n x}$.

Thus, for $n$ sufficiently large, we have $\rho \geq i$. Also, for sufficiently large $n$, we have $P \leq (1+\delta) \sqrt{n x} = (1+\delta) \sqrt{ \frac{(1+\epsilon) n}{i-1}}$. Thus, if $f(i,\epsilon,n) > (1 + \delta) \sqrt{\frac{(1+\epsilon) n}{i-1}}$ for infinitely many $n$, then for infinitely many $n$ we would have $\ql(H_n) \leq \frac{(1+\epsilon) n}{i-1} = x n$, a contradiction.
\end{proof}

\section{Acknowledgments}
Thanks to the anonymous reviewers for helpful suggestions and corrections.

\end{document}